\def\paren#1{\left( #1 \right)}
\def\acc#1{\left\{ #1 \right\}}
\def\ceil#1{\left\lceil #1 \right\rceil}
\renewcommand{\le}{\leqslant}
\renewcommand{\ge}{\geqslant}
\theoremstyle{plain}
\newtheorem{theorem}{Theorem}
\newtheorem{lemma}[theorem]{Lemma}
\theoremstyle{definition}
\theoremstyle{remark}
\title{4-tangrams are 4-avoidable}
\author{Pascal Ochem\affiliationmark{1} \and Th\'eo Pierron\affiliationmark{2}}
\affiliation{
  % one line per affiliation, no postal codes, grant numbers or similar
  LIRMM, CNRS, Universit\'e de Montpellier, Montpellier, France.\\
  %Univ Lyon, UCBL, CNRS, INSA Lyon, LIRIS, UMR5205, F-69622 Villeurbanne, France.}
  Univ Lyon, UCBL, CNRS, INSA Lyon, LIRIS, Lyon, France.}
\keywords{combinatorics on words}
\begin{document}
\publicationdata{vol. 27:3}{2025}{1}{10.46298/dmtcs.15310}{2025-03-03; 2025-03-03; 2025-06-16}{2025-06-26}

\maketitle

\vspace{0mm}

\begin{abstract}
A tangram is a word in which every letter occurs an even number of times.
Thus it can be cut into parts that can be arranged into two identical words.
The \emph{cut number} of a tangram is the minimum number of required cuts in this process.
Tangrams with cut number one correspond to squares.
For $k\ge1$, let $t(k)$ denote the minimum size of an alphabet over which an infinite word 
avoids tangrams with cut number at most~$k$.
The existence of infinite ternary square-free words shows that $t(1)=t(2)=3$.
We show that $t(3)=t(4)=4$, answering a question from D\k{e}bski, Grytczuk, Pawlik, Przyby\l{}o, and \'Sleszy\'nska-Nowak.
\end{abstract}

\section{Introduction}\label{sec:intro}
A \emph{tangram} is a word in which every letter occurs an even number of times, possibly zero.
In particular, tangrams are a generalization of squares.
In this article, we consider a classification of tangrams depending on how close they are from being a square.
This relies on the so-called \emph{cut number} of a tangram, recently introduced by D\k{e}bski, Grytczuk, Pawlik, Przyby\l{}o, and \'Sleszy\'nska-Nowak~\cite{Debski(2024)}.
The \emph{cut number} of a tangram is defined as the minimum number of cuts needed so that the parts can be rearranged into two identical words.
Tangrams with cut number at most $k$ are called $k$-tangrams. Note that $1$-tangrams are exactly squares, and the larger the cut number, the farther the tangram is from a square.

Let $\Sigma_q=\acc{\texttt{0},\texttt{1},\ldots, q-1}$ denote the $q$-letter alphabet.
It is straightforward to check that every binary word of length 4 contains a square, while a famous theorem from Thue in 1906 asserts that there exist infinite words avoiding squares over $\Sigma_3$.
In~\cite{Debski(2024)}, the authors consider a similar question by investigating (infinite) words without tangrams.
By Lemma~3 in~\cite{Rampersad(2011)}, every infinite word must contain some tangram.
Thus, they consider the relation between the size of the alphabet and the cut number of the excluded tangrams.
%A word $W$ avoids a word $T$ if it is not possible to write $W=ATB$, for any words $A$ and $B$ (possibly empty).
%The famous 1906 theorem of Thue asserts that there exist arbitrarily long words avoiding squares over an alphabet with just three letters.
For $k\ge1$, the authors thus define $t(k)$ as the minimum alphabet size such that there exists an infinite word 
avoiding $k$-tangrams. By definition, $t(k)$ is non-decreasing, and since every $2$-tangram contains a square, the result of Thue shows that $t(1)=t(2)=3$.

The other results in~\cite{Debski(2024)} are summarized in the following. 

\begin{theorem}[\cite{Debski(2024)}]\ 
\begin{itemize}
\item $t(k)\le1024\ceil{\log_2 k+\log_2 \log_2 k+1}$ for every $k\ge3$.
\item $t(k)\le k+1$ for every $k\ge4$.
\item $4\le t(3)\le t(4)\le5$
\end{itemize}
\end{theorem}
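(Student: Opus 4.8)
The plan is to treat the three bounds separately, after noting that they are not independent: applying the second bullet at $k=4$ gives $t(4)\le 5$, and since $t$ is non-decreasing this already yields $t(3)\le t(4)\le 5$. The third bullet therefore reduces to the single lower bound $t(3)\ge 4$, so the genuine content is (a) the lower bound $t(3)\ge 4$, (b) the linear upper bound $t(k)\le k+1$, and (c) the logarithmic upper bound. I would establish lower bounds by a finite search combined with K\"onig's lemma, and upper bounds by producing infinite words over the claimed alphabets — explicitly for (b) and probabilistically for (c) — and certifying that every tangram factor requires too many cuts.

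For $t(3)\ge 4$ I would show that no infinite word over $\Sigma_3$ avoids $3$-tangrams. The first ingredient is a local test deciding whether a given factor is a $3$-tangram, i.e.\ whether at most $3$ cuts split it into at most $4$ parts that reassemble into two equal words; this is a bounded check over the ways of pairing and ordering few pieces. I would then run a backtracking search through the tree of $3$-tangram-free ternary words, pruning a branch as soon as the current suffix completes a $3$-tangram. The crux is a (computer-assisted) verification that this tree has bounded depth, i.e.\ that every sufficiently long ternary word contains a $3$-tangram; K\"onig's lemma then excludes an infinite $3$-tangram-free ternary word, giving $t(3)>3$.

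For $t(k)\le k+1$ I would exhibit an explicit infinite word over $\Sigma_{k+1}$, most naturally the fixed point of a well-chosen uniform morphism, and prove that every tangram factor has cut number at least $k+1$. The structural fact to exploit is that a tangram with cut number at most $k$ is a concatenation of at most $k+1$ pieces regroupable into two copies of a single word $W$, so one of the two copies is assembled from at most $\ceil{(k+1)/2}$ pieces. I would design the word so that its letters recur only in tightly controlled, misaligned positions (the large alphabet forcing any two equal factors to be far apart), making it impossible to realize $W$ twice from so few pieces; verifying this reduces to a finite case analysis bounded by the length of the morphism's images. Since this applies at $k=4$, it produces a $5$-letter $4$-tangram-free word, hence $t(4)\le 5$ and, with monotonicity and the lower bound above, the whole third bullet.

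The hard part, and the main obstacle, is the logarithmic bound $t(k)\le 1024\ceil{\log_2 k+\log_2\log_2 k+1}$, where the alphabet is only of order $\log k$. A direct probabilistic attack (Lov\'asz Local Lemma or entropy compression on a random word over $\Sigma_q$) seems to fall short: bounding the number of $k$-tangram shapes of length $2n$ by choosing at most $k$ cut positions and a reassembly pattern, weighted against the $q^{-n}$ probability that the two halves coincide, only yields an alphabet polynomial in $k$. To reach a logarithmic alphabet I expect one must instead \emph{amplify} the minimum cut number. Starting from a ternary square-free word, in which every tangram factor already has cut number at least $3$ because every $2$-tangram contains a square, I would seek a morphism sending any word over $\Sigma_q$ whose tangram factors all have cut number $>k$ to a word over $\Sigma_{q+O(1)}$ whose tangram factors all have cut number $>2k$: the extra letters act as block markers forcing every cut of the image to induce a cut of the preimage while adding new forced cuts at block boundaries, roughly doubling the count. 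Iterating this doubling about $\log_2 k$ times from the square-free base gives a word over an alphabet of size $3+O(\log k)$, with the constant $1024$ absorbing the per-step alphabet cost and the overhead of the doubling lemma. Proving that the cut number genuinely at least doubles at each step — that no clever regrouping of the image circumvents the block markers — is the delicate point on which the logarithmic bound rests.
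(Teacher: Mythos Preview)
This theorem is not proved in the present paper at all: it is stated with the citation~\cite{Debski(2024)} and serves purely as background summarizing results of D\k{e}bski, Grytczuk, Pawlik, Przyby\l{}o, and \'Sleszy\'nska-Nowak. The paper offers no argument for any of the three bullets; its own contribution is the separate Theorem~\ref{thm:main}, namely $t(3)=t(4)=4$. Consequently there is no ``paper's own proof'' against which to compare your proposal.

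That said, your sketch contains real gaps beyond mere informality. For $t(k)\le k+1$ you write that you would exhibit a fixed point of ``a well-chosen uniform morphism'' and that verifying no $k$-tangram occurs ``reduces to a finite case analysis bounded by the length of the morphism's images''; but nothing in your outline explains why the analysis is finite, since tangram factors can be arbitrarily long and you have not shown how the morphism controls their cut number uniformly in length. For the logarithmic bound your doubling idea is plausible in spirit, but the crucial step---that a morphism with a constant number of extra marker letters forces the cut number of every tangram in the image to at least double---is asserted, not argued, and you yourself flag it as ``the delicate point.'' If you want to reconstruct actual proofs of these bounds, you should consult~\cite{Debski(2024)} directly; the present paper simply quotes them.
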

Moreover, the authors leave as an open problem the exact value of $t(3)$. In this article, we prove the following. 
\begin{theorem}
\label{thm:main}
$t(3)=t(4)=4$.
\end{theorem}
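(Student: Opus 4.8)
Since $t$ is non-decreasing and $t(3)\ge4$ by the cited theorem of D\k{e}bski et al., we have $4\le t(3)\le t(4)$, so it suffices to prove $t(4)\le4$, that is, to exhibit one infinite word $\mathbf w$ over $\Sigma_4$ with no $4$-tangram as a factor. Because $1$-tangrams are squares and every $2$-tangram contains a square, such a $\mathbf w$ is in particular square-free; the plan is to look for it among the fixed points $h^\omega(\texttt{0})$ of a suitable morphism $h$ on $\Sigma_4$ — uniform, or at least synchronizing, with images long enough that factors of $\mathbf w$ admit an essentially unique preimage — the morphism itself being found by computer.

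The first step is to replace the infinitely many $4$-tangrams by a finite list of forbidden patterns. A $4$-tangram occurring in $\mathbf w$ is a factor $w=w_1\cdots w_m$ cut into $m\le5$ nonempty parts, together with a $2$-colouring of the parts and orderings of the two colour classes witnessing that both classes concatenate to a common word. There are only finitely many such \emph{shapes} (a choice of $m$, of the colouring, and of the two orderings), and each shape is a word equation in the variables $w_1,\dots,w_m$, for instance $w_1w_3=w_2w_4$ when $m=4$ and the outermost parts form one class. Solving each equation with L\'evi's lemma, every solution either forces a square in $w$ — impossible since $\mathbf w$ is square-free — or falls into one of finitely many parametrized shapes (for the example, $w=xyxzyz$); collecting the surviving ones over all shapes yields a finite set $\mathcal F$ of formulas such that $\mathbf w$ avoids every $4$-tangram provided it avoids $\mathcal F$. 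The second step is to certify that $\mathbf w=h^\omega(\texttt{0})$ avoids $\mathcal F$. This is the usual desubstitution argument: one shows that a shortest occurrence in $\mathbf w$ of an instance of some formula of $\mathcal F$, if it were long, would (after absorbing bounded stubs at the finitely many boundaries between the variable blocks of the instance) align with the $h$-block structure, so that removing one layer of $h$ by injectivity produces a strictly shorter occurrence of a formula of $\mathcal F$, a contradiction; this is completed by a finite computer check that no factor of $\mathbf w$ up to an explicit length is an instance of any formula of $\mathcal F$, equivalently is a $4$-tangram.

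I expect the crux to be the passage from a long occurrence to a shorter one: one must control how the few internal boundaries lie relative to the blocks of $h$ — a boundary strictly inside a block $h(x)=\alpha\beta$ leaves stubs $\alpha,\beta$ that must be matched against stubs elsewhere in the occurrence — and ensure that the defining relation of the shape is inherited by the shorter instance. This, together with the fact that the free reordering of parts within a colour class multiplies the shapes (and hence the formulas in $\mathcal F$) that must be handled, is where the work concentrates and where a careful choice of $h$, with large image lengths and strong synchronization, does the heavy lifting. The remaining ingredients — square-freeness of $\mathbf w$, the L\'evi-lemma case analysis producing $\mathcal F$, and the bounded-length verification — are routine once $h$ is fixed.
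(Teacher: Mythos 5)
Your overall skeleton agrees with the paper's: reduce the infinitely many $4$-tangrams to a finite set of doubled patterns (the paper's $S_4$, with $21$ patterns, obtained by exactly the kind of case analysis over cuts and rearrangements that you describe), then build an explicit morphic word over $\Sigma_4$ and certify avoidance with a finite computer check. The reduction step and the lower bound $t(3)\ge4$ are fine.

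The genuine gap is in your certification mechanism. You propose a fixed point $h^\omega(\texttt{0})$ of a morphism on $\Sigma_4$ and a desubstitution argument: a long occurrence of a pattern would ``align with the $h$-block structure'' and yield a strictly shorter occurrence one level down. For squares this is routine, but for a pattern such as $ABCDACBD$ it is not: the seven internal boundaries of an occurrence $m(A)m(B)\cdots m(D)$ may each fall strictly inside an $h$-block, and after desubstitution the preimage carries an occurrence of the pattern only ``up to bounded errors'' at each boundary --- it need not be an occurrence of any formula in your finite set $\mathcal F$, so the induction does not close as stated. You correctly identify this as the crux but offer no way through it. The paper avoids desubstitution of pattern occurrences entirely: it uses a $312$-uniform morphism $h:\Sigma_6^*\to\Sigma_4^*$ applied to $\tfrac65^+$-free words over a \emph{six}-letter alphabet, proves via a synchronization lemma that the image is $\paren{\tfrac54^+,9}$-free, and then shows by elementary linear inequalities on the lengths $|m(A)|,\dots,|m(D)|$ (exploiting that every pattern in $S_4$ has avoidability exponent exceeding $\tfrac54$) that \emph{any} occurrence of a pattern of $S_4$ in a $\paren{\tfrac54^+,9}$-free word has bounded length; only then does a finite check finish the proof. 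Note also that the paper remarks that every $\tfrac75^+$-free word over $\Sigma_4$ contains a $4$-tangram, so the target word cannot be too repetition-free, which is part of why an off-the-shelf fixed point over $\Sigma_4$ was not found and an ad-hoc construction from $\Sigma_6$ was needed. To repair your argument you would either have to carry out the boundary-stub analysis in full (essentially Cassaigne-style machinery for pattern occurrences in morphic words) or switch to the paper's repetition-threshold route.
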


\section{Preliminaries}
To obtain Theorem~\ref{thm:main}, we heavily use the following relation observed in~\cite{Debski(2024)} between $k$-tangrams and patterns.
A pattern $P$ is a finite word over the alphabet $\Delta=\acc{A,B,\dots}$, whose letters are called \emph{variables}.
An \emph{occurrence} of a pattern $P$ in a word $w\in\Sigma^*$ is a non-erasing morphism $h:\Delta^*\to\Sigma^*$ such that $h(P)$ is a factor of $w$,
and a word $w$ avoids a pattern $P$ if it contains no occurrence of~$P$.
Given a square-free word $w$, a \emph{repetition} in $w$ is a factor of $w$ of the form $uvu$.
Its \emph{period} is $|uv|$ and its \emph{exponent} is $\tfrac{|uvu|}{|uv|}$.
Given $\alpha\in\mathbb{Q}$ and $n\in\mathbb{N}$, a word $w$ is \emph{$(\alpha^+,n)$-free} if it does not contain any repetition with period at least $n$ and exponent strictly greater than $\alpha$.
We say that $w$ is \emph{$\alpha^+$-free} if it is $(\alpha^+,1)$-free. 
A morphism $f:\Sigma_s^*\rightarrow\Sigma_e^*$ is \emph{$q$-uniform} if $|f(a)|=q$ for every $a\in\Sigma_s$.

As noticed in~\cite{Debski(2024)}, a $k$-tangram is an occurrence of some pattern with at most $k$ variables such that every variable occurs exactly twice.
So for every $k\ge1$, there exists a minimum set $S_k$ of such patterns such that avoiding
$S_k$ is equivalent to avoiding $k$-tangrams. Obviously, $S_k\subset S_{k+1}$ for every $k\ge1$.
A~small case analysis gives the first four sets $S_k$:
%Let us denote by $S_k$ the set of such patterns, so for example $S_1=\acc{AA}$ and $S_2=\acc{AA, ABBA, ABAB, AABB}$. Observe that patterns in $S_2$ are redundant since each of them contains an occurrence of the pattern $AA$. Therefore, any word avoiding $AA$ also avoids every pattern in $S_2$, hence avoids $2$-tangrams. 
%
%One can similarly check (by a small case analysis) that avoiding the sets $S_3$ and $S_4$ can be reduced to 
\begin{itemize}
\item $S_1=S_2=\acc{AA}$
\item $S_3=\acc{AA, ABACBC, ABCACB, ABCBAC}$
\item $S_4=\{AA, ABACBC, ABCACB, ABCBAC, ABACBDCD, ABACDBDC,\\
ABACDCBD, ABCACDBD, ABCADBDC, ABCADCBD, ABCADCDB,\\
ABCBADCD, ABCBDACD, ABCBDADC, ABCBDCAD, ABCDACBD,\\
ABCDADCB, ABCDBADC, ABCDBDAC, ABCDCADB, ABCDCBAD\}$,
\end{itemize}
In the next section, we prove Theorem~\ref{thm:main} by constructing infinite words over $\Sigma_4$ avoiding all patterns in $S_4$.
But first, let us show the weaker result $t(3)\le4$ as a straightforward (and computer-free) consequence of well-know results in pattern avoidance.
Following Cassaigne~\cite{Cassaigne(1994)}, we associate to each pattern a \emph{formula}, by replacing each variable appearing only once by a dot (such variables are called \emph{isolated}).
For example, the formula associated to the pattern $ABBACABADAA$ is $ABBA.ABA.AA$. The factors between the dots are called \emph{fragments}. Similarly to patterns, an \emph{occurrence} of a formula $f$ in a word $w\in\Sigma^*$ is a non-erasing morphism $h:\Delta^*\to\Sigma^*$
such that every fragment of $f$ is mapped under $h$ to a factor of $w$ (note that the order of the fragments does not matter). A word $w$ \emph{avoids} a formula $f$ if it contains no occurrence of $f$.
%A classical compactness argument~\cite{??} ensures that a pattern is unavoidable on a $k$-letter alphabet if and only if so is the corresponding formula.

Consider the formula $F_3=AB.BA.AC.CA.BC$. Notice that the pattern $AA$, considered as a word, contains an occurrence of $F_3$.
Moreover, $ABACBC$, $ABCACB$, and $ABCBAC$ also contain an occurrence of $F_3$
since they have 5 distinct factors of length 2.
So every pattern in $S_3$ contains an occurrence of $F_3$.
Baker, McNulty, and Taylor~\cite{Baker(1989)} have considered the fixed point $b_4\in\Sigma_4^\omega$ of the morphism defined by
$\texttt{0}\mapsto\texttt{01}$, $\texttt{1}\mapsto\texttt{21}$, $\texttt{2}\mapsto\texttt{03}$, $\texttt{3}\mapsto\texttt{23}$
(that is, $b_4=\texttt{01210321012303210121}\ldots$) and shown that $b_4$ avoids $F_3$.
Then $b_4$ avoids every pattern in $S_3$. So $b_4$ avoids 3-tangrams, which implies that $t(3)\le4$.

\section{Proof of $t(4)\le 4$}\label{t_4}
Unfortunately, the word $b_4$ contains the factor \texttt{03210123} which is a 4-tangram.
Moreover, backtracking shows that every infinite word over $\Sigma_4$ avoiding 4-tangrams
must contain a factor $aba$ for some letters $a$ and $b$.
In particular, every $\tfrac75^+$-free word over $\Sigma_4$ contains a 4-tangram.
More generally, we have not been able to find a word that might witness $t(4)\le 4$ in the literature.
Thus we use an ad-hoc construction.
Consider the $312$-uniform morphism $h:\Sigma_6^*\to\Sigma_4^*$ below.

{\footnotesize
\begin{align*}
 &\texttt{0}\rightarrow\texttt{130321031012301020132102123120121302010310120301303213103132301030213032031}\\ &\texttt{2023020103212012312102132010203120121302123210131203013023031032013101230102103}\\ &\texttt{2120213012010310210132021230323010203121021321201231020103210212301213120321323}\\ &\texttt{0231232013121030103231302123120121321023132301232032103023031320103102101301203}\\ 
 &\texttt{1}\rightarrow\texttt{130321031012301020131210323130212320102031202302130320301023130132131031230}\\ &\texttt{1030213013120313231012130201203202102301210132010310231303120130102103101230313}\\ &\texttt{2131032312320121321202313230123203210302031230232132032310203012101232021303203}\\ &\texttt{1030230132023210320301230313020103212023132301232032103023031320103102101301203}\\ 
 &\texttt{2}\rightarrow\texttt{130321031012301020131210323130203012303103210131023013032031030213101201031}\\ &\texttt{2102123020120232101213021231203132131012303103203013023101312031030213032301020}\\ &\texttt{3121013202123032310232132013123130321013012010310213031320130102310121030201323}\\ &\texttt{0231232032130203012303103210131023013032031030213101232123031320103102101301203}\\ 
 &\texttt{3}\rightarrow\texttt{130321031012301020131210323130203012303103210130102031210213212012310201032}\\ &\texttt{1021230121312023210302301303203102320213023031203010323130212320131210301023101}\\ &\texttt{2013202102030123203213230231203020132032310232123013213103123130212320102013121}\\ &\texttt{0323123023213203121310231232013230321202302012320323103023031320103102101301203}\\ 
 &\texttt{4}\rightarrow\texttt{130321031012301020131210212310120103202102030123203213230231203020132032310}\\ &\texttt{2321230313201030212023102032013230232120313213013123103212320132131023130312101}\\ &\texttt{3202102302012032101213021020312023201030213101230313202321032312301213120321323}\\ &\texttt{0231232013121030103231302123120121321023132301232032103020312302321320323102030}\\ 
 &\texttt{5}\rightarrow\texttt{130321031012301020131210212310120103202102030123203213230231203020132021023}\\ &\texttt{1012010302131031231301321030102310313201312130231232032132301213102010232120313}\\ &\texttt{2130131231032123201321310231303120130102103101230313202320103021310312313013210}\\ &\texttt{3010231012013202120310210130120103212023132301232032103020312302321320323102030}
\end{align*}
}

We will show that for every $\tfrac65^+$-free word $w$ over $\Sigma_6$, 
$h(w)$ avoids every pattern in $S_4$.
Together with the result of~\cite{Kolpakov(2011)} that there exist
exponentially many $\tfrac65^+$-free infinite words over $\Sigma_6$, and the fact that $h$ is injective,
this implies that there exist exponentially many words over $\Sigma_4$ avoiding 4-tangrams.

First, we show that $h(w)$ is $\paren{\tfrac54^+,9}$-free by using the following lemma.
A uniform morphism $f:\Sigma^*\rightarrow\Delta^*$ is \emph{synchronizing}
if for all $a,b,c\in\Sigma$ and $u,v\in \Delta^*$, if $f(ab)=uf(c)v$, then either $u=\varepsilon$ and $a=c$, or $v=\varepsilon$ and $b=c$.
%The following result is quoted almost verbatim from:
\begin{lemma}[\cite{Ochem(2006)}]\label{sync}
Let $\alpha,\beta\in\mathbb{Q},\ 1<\alpha<\beta<2$ and $n\in\mathbb{N}^*$.
Let $h:\Sigma^*_s\rightarrow\Sigma^*_e$ be a synchronizing $q$-uniform morphism (with $q\ge 1$).
If $h(w)$ is $\paren{\beta^+,n}$-free for every $\alpha^+$-free word $w$ such that
$|w|<\max\paren{\frac{2\beta}{\beta-\alpha},\frac{2(q-1)(2\beta-1)}{q(\beta-1)}}$,
then $h(t)$ is $\paren{\beta^+,n}$-free for every (finite or infinite) $\alpha^+$-free word $t$.
\end{lemma}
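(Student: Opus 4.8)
The plan is to argue by contradiction. Suppose the conclusion fails: there is an $\alpha^+$-free word $t$ (finite or infinite) such that $h(t)$ contains a repetition $E=uvu$ with period $P=\abs{uv}\ge n$ and exponent $\abs{E}/P>\beta$. Writing $L=\abs{E}=\abs{u}+P$, the exponent condition reads $\abs{u}>(\beta-1)P$, while $\abs{u}\le P$ and the exponent is at most $2$ since $u$ is a prefix of $uv$. I would then pass to the shortest factor $w$ of $t$ such that $E$ is a factor of $h(w)$. Because $h$ is $q$-uniform and $w$ is minimal, $E$ must meet the first and the last block of $h(w)$, so $q(\abs{w}-1)\le L+q-2$, that is, $\abs{w}\le\paren{L+2(q-1)}/q$. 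Since $w$ is a factor of the $\alpha^+$-free word $t$, it is itself $\alpha^+$-free; hence if I can show $\abs{w}<\max\paren{\tfrac{2\beta}{\beta-\alpha},\tfrac{2(q-1)(2\beta-1)}{q(\beta-1)}}$, then $h(w)$ is a short image that still contains the forbidden repetition $E$, contradicting the hypothesis. Everything thus reduces to bounding the period $P$, and the argument splits according to the length of the repeated block $u$.

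The first case is $\abs{u}\le 2(q-1)$. Here no synchronization is needed: from $\abs{u}>(\beta-1)P$ I get $P<\abs{u}/(\beta-1)\le 2(q-1)/(\beta-1)$, whence $L=\abs{u}+P<2(q-1)\beta/(\beta-1)$. Substituting this into $\abs{w}\le\paren{L+2(q-1)}/q$ gives, after a one-line simplification, exactly $\abs{w}<\tfrac{2(q-1)(2\beta-1)}{q(\beta-1)}$, the second quantity in the maximum.

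The substantial case is $\abs{u}\ge 2q-1$, where the repeated block is long enough to contain a full image block with slack, and this is where the synchronizing hypothesis enters. Since $E$ has period $P$, we have $h(t)[x]=h(t)[x+P]$ for every position $x$ in the first copy of $u$; choosing a block boundary $b$ of $h(t)$ with both $b$ and $b+q$ inside this copy (possible precisely because $\abs{u}\ge 2q-1$), the whole block $h(c)=h(t)[b..b+q)$ is reproduced at position $b+P$. If $b+P$ were not a block boundary, then $h(c)$ would straddle two consecutive blocks $h(d)h(e)$ with both overhangs nonempty, contradicting the definition of a synchronizing morphism; hence $b+P$ is a boundary and $q\mid P$, say $P=qP'$. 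Using that a synchronizing uniform morphism is injective on letters (a short consequence of the definition, obtained by applying it to $h(ab)=\varepsilon\, h(b)\, v$ when $h(a)=h(b)$), the block-aligned matching descends: for every block index $j$ whose block lies entirely inside the first copy of $u$ one gets $t[j]=t[j+P']$, so $t$ contains a repetition of period $P'$ whose repeated part has length $N$, the number of such blocks. By $\alpha^+$-freeness of $t$ this descended repetition has exponent at most $\alpha$, i.e. $N\le(\alpha-1)P'$, whereas $\abs{u}>(\beta-1)P$ forces $N$ to be at least of order $(\beta-1)P'$. Comparing the two bounds pins down $P'$ (hence $P=qP'$ and $L$), and translating back through $\abs{w}\le\paren{L+2(q-1)}/q$ yields $\abs{w}<\tfrac{2\beta}{\beta-\alpha}$, the first quantity in the maximum.

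In either case $\abs{w}$ lies below the stated threshold while $h(w)$ contains a repetition of period at least $n$ and exponent greater than $\beta$, contradicting the hypothesis; this proves the lemma. The delicate point of the whole argument is the long-block case: one must (i) extract a boundary $b$ with enough room on both sides, (ii) invoke synchronization exactly once to deduce $q\mid P$, and (iii) perform the descent while controlling the one- or two-block losses at the two ends of the matched region. I expect this boundary bookkeeping — rather than any conceptual difficulty — to be the main obstacle, as it is what produces the precise constant $\tfrac{2\beta}{\beta-\alpha}$ and what dictates the threshold $\abs{u}\ge 2q-1$ separating the two cases; the first case, by contrast, is a direct computation that matches $\tfrac{2(q-1)(2\beta-1)}{q(\beta-1)}$ on the nose.
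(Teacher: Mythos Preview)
The paper does not prove this lemma at all: it is quoted verbatim from \cite{Ochem(2006)} and used as a black box to certify that $h(w)$ is $\paren{\tfrac54^+,9}$-free. There is therefore no ``paper's own proof'' to compare against.

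That said, your outline is the standard argument behind this lemma and is essentially correct. Your Case~1 computation is exact and produces the constant $\tfrac{2(q-1)(2\beta-1)}{q(\beta-1)}$ on the nose. In Case~2 you correctly use synchronization once to force $q\mid P$, and your injectivity-on-letters remark is right (apply the definition with $c=b$ and $u=\varepsilon$ when $h(a)=h(b)$). The only place that needs care, as you yourself flag, is the boundary bookkeeping in the descent: you must show that the number $N$ of complete $h$-blocks inside the first copy of $u$ satisfies $N\ge (\abs{u}-2(q-1))/q$ (losing at most $q-1$ positions at each end), combine this with $N\le(\alpha-1)P'$ from $\alpha^+$-freeness of $t$ and with $\abs{u}>(\beta-1)qP'$, and then feed the resulting bound on $P'$ back into $\abs{w}\le\paren{L+2(q-1)}/q$ with $L\le 2qP'$; this indeed yields $\abs{w}<\tfrac{2\beta}{\beta-\alpha}$. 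None of this is conceptually difficult, but the inequalities must be tracked precisely to hit the stated constant, which is why the original reference is cited rather than reproved.
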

To see that $h$ is synchronizing, notice that $p=\texttt{13032103101230102013}$ is a prefix of the $h$-image of every letter and that $p$ does not appear elsewhere.
Moreover, we have checked that the $h$-image of every $\tfrac65^+$-free word of length smaller than $\frac{2\times\tfrac54}{\tfrac54-\tfrac65}=50$ is $\paren{\tfrac54^+,9}$-free.
Therefore $h(w)$ is $\paren{\frac54^+,9}$-free by Lemma~\ref{sync}. 

%Our goal in this section is to construct an infinite word on $4$ letters avoiding all the patterns in $S_4$.
%Our construction relies on a computer search using the tools from~\cite{Ochem2004}. 

Now we show that every occurrence $m$ of a pattern $P\in S_4$ in a $\paren{\tfrac54^+,9}$-free word
is such that $|m(P)|$ is bounded (see Table 1).
%Now we show that $h(w)$ avoids every pattern in $S_4$.
%For every pattern $P\in S_4$, the proof consists in the following steps.
%We consider a potential occurrence $m$ of $P$ in $h(w)$.
%Using the $\paren{\tfrac54^+,9}$-freeness of $h(w)$, we upper bound the lengths $a$, $b$, $c$, $d$.
%Finally, we check exhaustively by computer that $h(w)$ contains no occurrence of $P$ within these bounds.
%
%Now let us show how to bound $a$, $b$, $c$, $d$.
As an example, let us detail the case of $ABCDACBD$. % which has the lowest avoidability exponent. 
To lighten notations, we write $y=|m(Y)|$ for every variable~$Y$.
\begin{lemma}\label{lem:bounded}
Let $z$ be a $\paren{\tfrac54^+, 9}$-free word. Then if $z$ contains an occurrence $m$ of $ABCDACBD$, then $|m(ABCDACBD)|\le24$.
\end{lemma}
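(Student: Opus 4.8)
The plan is to analyze the combinatorial structure forced by the pattern $ABCDACBD$ when it occurs in a word that has no repetition of exponent $>\tfrac54$ with period $\ge 9$. Write the occurrence as $m(ABCDACBD)=ABCDACBD$ (abusing notation, identifying each variable with its image), so the word factor is the concatenation of eight blocks in the order $A,B,C,D,A,C,B,D$. The key observation is that the two copies of $A$ are separated by the block $BCD$, so $ABCDA$ is a repetition with period $a+b+c+d$ and exponent $\tfrac{2a+b+c+d}{a+b+c+d}$; likewise $DACBD$ is a repetition with period $d+a+c+b$; and crucially $CBC$ is not directly available, but $CDAC$ is a repetition $CDAC$ with period $c+d+a$, and $BDACB$… wait, the second $B$ comes before the second $D$, so one should instead read off the repetitions $ACBD\cdots$ carefully from the actual order $A\,B\,C\,D\,A\,C\,B\,D$.

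First I would list all the ``short-return'' repetitions visible in the block sequence $ABCDACBD$: between the two $A$'s sits $BCD$ (repetition $A\,BCD\,A$); between the two $C$'s sits $DA$ (repetition $C\,DA\,C$); between the two $B$'s sits $CDAC$ (repetition $B\,CDAC\,B$); between the two $D$'s sits $ACB$ (repetition $D\,ACB\,D$). Each such repetition $u\,w\,u$ with $|u|=\ell$ and period $p$ either has period $p\le 8$ (so very short, bounding the relevant variables by $8$), or has exponent $\le \tfrac54$, i.e. $2\ell+ (\text{gap})\le \tfrac54 p$, equivalently $\ell \le \tfrac14(\text{period})$ — giving $4\ell \le p$, i.e. the repeated block is at most one quarter of the period. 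Applying this to the four repetitions above yields the inequalities $4a\le a+b+c+d$, $4c\le c+d+a$, $4b\le b+c+d+a+c$, $4d\le d+a+c+b$, i.e. $3a\le b+c+d$, $3c\le d+a$, $3b\le 2c+d+a$ wait I need to recount the $B$-gap, $3d\le a+b+c$. From $3c\le a+d$ and the others one derives that $c$ (and symmetrically the others) is bounded by a constant multiple of the smallest block; combined with the $(\tfrac54^+,9)$-freeness applied once more to the outermost repetition $ABCDACB\,D$-style structure, this should force $|m(ABCDACBD)|=a+b+c+d+a+c+b+d = 2a+2b+2c+2d$ to be at most a small constant, and a careful bookkeeping of the worst case should land it at exactly $24$.

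The main obstacle I anticipate is that the naive linear inequalities above are homogeneous, so they alone only bound the ratios among $a,b,c,d$, not their absolute sizes — I still need one genuinely ``short period'' hypothesis to pin down an absolute constant. The resolution is the alternative in each repetition: if a repetition has period $\le 8$ then it already bounds some variables by $8$ directly; if every relevant repetition has period $\ge 9$, then $(\tfrac54^+,9)$-freeness forces all the exponent inequalities simultaneously, and one checks these are jointly infeasible for long words, leaving only the short-period case. So the proof splits into cases according to which of the periods $a+b+c+d$, $c+d+a$, $b+c+d+a+c$, $d+a+c+b$ (and any further repetitions one can extract, e.g. using that $z$ is $(\tfrac54^+,9)$-free also forbids overlaps among the blocks of a single variable's image) fall below $9$; in each branch a handful of the variables get an absolute bound of $8$, and the homogeneous inequalities propagate that bound to the rest, after which summing gives $|m(ABCDACBD)|\le 24$. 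The delicate part is making sure the inequalities are transcribed with the correct gaps from the order $A,B,C,D,A,C,B,D$ and that the case split is exhaustive; the arithmetic optimizing $2(a+b+c+d)$ subject to the resulting constraints is then routine.
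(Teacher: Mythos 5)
Your proposal matches the paper's proof: it extracts the same four repetitions $m(ABCDA)$, $m(BCDACB)$, $m(CDAC)$, $m(DACBD)$, obtains the same per-repetition dichotomy (period $\le 8$, or $3a\le b+c+d$, $3b\le a+2c+d$, $3c\le a+d$, $3d\le a+b+c$ respectively), and correctly identifies the key point that the four homogeneous inequalities are jointly infeasible for positive lengths (the paper certifies this with the combination $6\cdot(1)+4\cdot(2)+7\cdot(3)+6\cdot(4)$, which yields $a+c+d\le 0$), so some period must be short. The ``routine arithmetic'' you defer is exactly the paper's two-step finish: $a+c+d\le 8$ gives $c\le 6$, then the $B$-repetition forces $b\le 4$, whence $a+b+c+d\le 12$ and $|m(ABCDACBD)|\le 24$.
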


\begin{proof}
Consider an occurrence $m$ of $ABCDACBD$ in $z$. The factor $m(ABCDA)$ of $z$ is a repetition with period $|m(ABCD)|$ and exponent $\tfrac{|m(ABCDA)|}{|m(ABCD)|}$.

Since $z$ is $\paren{\tfrac54^+,9}$-free, then $a+b+c+d\le8$ or $\tfrac{2a+b+c+d}{a+b+c+d}\le\tfrac54$.
The latter inequality gives $\tfrac{a}{a+b+c+d}\le\tfrac14$ and then
\begin{equation}\label{i}
3a\le b+c+d.
\end{equation}
Similarly, the repetition $m(BCDACB)$ implies that $a+b+2c+d\le8$ or
\begin{equation}\label{ii}
3b\le a+2c+d.
\end{equation}
$m(CDAC)$ implies that $a+c+d\le8$ or
\begin{equation}\label{iii}
3c\le a+d.
\end{equation}
$m(DACBD)$ implies that $a+b+c+d\le8$ or
\begin{equation}\label{iv}
3d\le a+b+c.
\end{equation}
Suppose that $a+c+d\ge9$.
Then the combination $6\times\mbox{{\bf(\ref{i})}}+
4\times\mbox{{\bf(\ref{ii})}}+
7\times\mbox{{\bf(\ref{iii})}}+
6\times\mbox{{\bf(\ref{iv})}}$
gives $a+c+d\le0$, a contradiction. Therefore
\begin{equation}\label{v}
a+c+d\le8.
\end{equation}
This implies
\begin{equation}\label{vi}
c\le6.
\end{equation}
Now suppose that $b\ge5$, so that $a+b+2c+d\ge9$.
Then the combination $\mbox{{\bf(\ref{ii})}}+
\mbox{{\bf(\ref{v})}}+
\mbox{{\bf(\ref{vi})}}$
gives $3b\le14$, which contradicts $b\ge5$. Therefore
\begin{equation}\label{vii}
b\le4.
\end{equation}
By $\mbox{{\bf(\ref{v})}}$ and $\mbox{{\bf(\ref{vii})}}$, we get that $a+b+c+d\le 12$, hence $|m(ABCDACBD)|\le 24$.    
\end{proof}

Now, notice that every pattern in $S_4$ is \emph{doubled}, that is, every variable appears at least twice~\cite{Domenech(2023),Ochem(2016)}.
Alternatively, a doubled pattern is a formula with exactly one fragment.
%hence if some $(\alpha^+,n)$-free word contains such a pattern, then its factors associated with each variable must be small (as a function of $\alpha$ and $n$), which allows us to test their presence. This is summarized in the lemma below.
The \emph{avoidability exponent} $AE(f)$ of a pattern or a formula $f$ is the largest real $x$ such that every $x$-free word avoids $f$.
By Lemma~10 in~\cite{Ochem(2021)}, the avoidability exponent of a doubled pattern with 4 variables
is at least $\tfrac65$. This bound is not good enough, so we have computed the avoidability exponent of every pattern in $S_4$, see Table 1. Notice that these avoidability exponents are greater than $\tfrac54$.
Then the $\paren{\tfrac54^+,9}$-freeness of $h(w)$ ensures that there is no "large" occurrence of a pattern
in $S_4$, that is, such that the period of every repetition is at least 9. This is witnessed by the combination
$6\times\mbox{{\bf(\ref{i})}}+
4\times\mbox{{\bf(\ref{ii})}}+
7\times\mbox{{\bf(\ref{iii})}}+
6\times\mbox{{\bf(\ref{iv})}}$ in the proof of Lemma~\ref{lem:bounded}.
Now, to bound the length of the other occurrences, we do not rely on a tedious analysis by hand as in
Lemma~\ref{lem:bounded}. Instead, the bound in the last column of Table 1 is computed as 
the maximum of $2(a+b+c+d)$ such that $1\le a, b, c, d<100$\footnote{We chose the bound 100 as it is sufficient and keeps the computer check short.} and
$(a+b+c+d\le8\vee3a\le b+c+d)\wedge(a+b+2c+d\le8\vee3b\le a+2c+d)\wedge(a+c+d\le8\vee3c\le a+d)\wedge(a+b+c+d\le8\vee3d\le a+b+c)$,
again with the example of $P=ABCDACBD$ of Lemma~\ref{lem:bounded}.

Finally, for every pattern $P\in S_4$, we check exhaustively by computer 
that $h(w)$ contains no occurrence of $P$ of length at most the corresponding bound\footnote{The C code to check the properties of the morphism $h$ and the bounds on $|m(P)|$ is available at\\ \url{http://www.lirmm.fr/~ochem/morphisms/tangram4.htm}.}.
So $h(w)$ avoids every $P\in S_4$. So $h(w)$ avoids 4-tangrams. So $t(4)\le4$.

\begin{table}[h]
\begin{center}
{\begin{tabular}{|l|l|l|l|}
\hline
Pattern $P$ & $P^R$ & $AE(P)$ & Bound on $|m(P)|$\\
\hline
$AA$ & self-reverse & 2 & 16\\
\hline
$ABACBC$ & self-reverse & $1.414213562=\sqrt{2}$ & 30\\
\hline
$ABCACB$ & $ABCBAC$ & 1.361103081 & 26\\
\hline
$ABACBDCD$ & self-reverse & 1.381966011 & 32\\
\hline
$ABACDBDC$ & $ABCBADCD$ & $1.333333333=\tfrac43$ & 40\\
\hline
$ABACDCBD$ & $ABCACDBD$ & 1.340090632 & 32\\
\hline
$ABCADBDC$ & $ABCBDACD$ & 1.292893219 & 32\\
\hline
$ABCADCBD$ & self-reverse & 1.295597743 & 28\\
\hline
$ABCADCDB$ & $ABCBDCAD$ & 1.327621756 & 32\\
\hline
$ABCBDADC$ & self-reverse & 1.302775638 & 32\\
\hline
$ABCDACBD$ & self-reverse & 1.258055872 & 24\\
\hline
$ABCDADCB$ & $ABCDCBAD$ & 1.288391893 & 42\\
\hline
$ABCDBADC$ & self-reverse & 1.267949192 & 24\\
\hline
$ABCDBDAC$ & $ABCDCADB$ & 1.309212406 & 44\\
\hline
\end{tabular}}
\end{center}
\caption{The patterns in $S_4$, their avoidability exponent, and the upper bound for the length of their occurrences in a $\paren{\tfrac54^+,9}$-free word.}
\end{table}

\section{Concluding remarks}\label{conclusion}
Our words $h(w)$ contain the factor $\texttt{012130212321}$
which is a 5-tangram since $\texttt{0|1|213021|2|3|21}$ can be rearranged as $\texttt{213021|2|1|3|0|21}$.
The exact value of $t(k)$ remains unknown for every $k\ge5$. In particular, we only known that $4\le t(5)\le6$.
Improving the upper bound on $t(5)$ using the approach in this paper might be tedious, as we expect
the set $S_5$ to be quite large.

\nocite{*}
%\bibliographystyle{abbrvnat}
% use the following instead if you encounter problems 
% \bibliographystyle{alpha}
\bibliographystyle{abbrv}
\bibliography{sample-dmtcs}

\begin{thebibliography}{1}

\bibitem{Baker(1989)}
K.~A. Baker, G.~F. McNulty, and W.~Taylor.
\newblock Growth problems for avoidable words.
\newblock {\em Theor. Comput. Sci.}, 69(3):319 -- 345, 1989.

\bibitem{Cassaigne(1994)}
J.~Cassaigne.
\newblock {\em {Motifs \'evitables et r\'egularit\'e dans les mots}}.
\newblock PhD thesis, {Universit\'e Paris VI}, 1994.

\bibitem{Debski(2024)}
M.~D\k{e}bski, J.~Grytczuk, B.~Pawlik, J.~Przyby\l{}o, and
  M.~\'Sleszy\'nska-Nowak.
\newblock Words avoiding tangrams.
\newblock {\em Annals of Combinatorics.}, 2024.

\bibitem{Domenech(2023)}
A.~Domenech and P.~Ochem.
\newblock Doubled patterns with reversal and square-free doubled patterns.
\newblock {\em Electron. J. Combinatorics.}, 30(1), 2023.

\bibitem{Kolpakov(2011)}
R.~Kolpakov and M.~Rao.
\newblock On the number of {D}ejean words over alphabets of 5, 6, 7, 8, 9 and
  10 letters.
\newblock {\em Theor. Comput. Sci.}, 412(46):6507--6516, 2011.

\bibitem{Ochem(2006)}
P.~Ochem.
\newblock A generator of morphisms for infinite words.
\newblock {\em RAIRO - Theoret. Informatics Appl.}, 40:427--441, 2006.

\bibitem{Ochem(2016)}
P.~Ochem.
\newblock Doubled patterns are 3-avoidable.
\newblock {\em Electron. J. Combinatorics.}, 23(1), 2016.

\bibitem{Ochem(2021)}
P.~Ochem and M.~Rosenfeld.
\newblock Avoidability of palindrome patterns.
\newblock {\em Electron. J. Combinatorics.}, 28(1), 2021.

\bibitem{Rampersad(2011)}
N.~Rampersad.
\newblock Further applications of a power series method for pattern avoidance.
\newblock {\em Electron. J. Combinatorics.}, 18(1), 2011.

\end{thebibliography}
\label{sec:biblio}

\end{document}